\newtheorem{thm}{Theorem}[section]
\newtheorem{lem}[thm]{Lemma}
\newtheorem{cor}[thm]{Corollary}
\newtheorem{obs}[thm]{Observation}
\newtheorem{defn}[thm]{Definition}
\newtheorem{conj}[thm]{Conjecture}
\numberwithin{equation}{section}
\begin{document}
 
\thispagestyle{plain}
 

\title{Vizing's Conjecture for Almost All Pairs of Graphs}

\author{Aziz Contractor}
\address{Aziz Contractor (\tt acontractor@student.clayton.edu)}

\author{Elliot Krop}
\address{Elliot Krop (\tt elliotkrop@clayton.edu)}
\address{Department of Mathematics, Clayton State University}
\date{\today}
\date{\today}

\begin {abstract}
For any graph $G=(V,E)$, a subset $S\subseteq V$ \emph{dominates} $G$ if all vertices are contained in the closed neighborhood of $S$, that is $N[S]=V$. The minimum cardinality over all such $S$ is called the domination number, written $\gamma(G)$. In 1963, V.G. Vizing conjectured that $\gamma(G \square H) \geq \gamma(G)\gamma(H)$ where $\square$ stands for the Cartesian product of graphs. In this note, we prove that if $\left|G\right|\geq \gamma(G)\gamma(H)$ and $\left|H\right|\geq \gamma(G)\gamma(H)$, then the conjecture holds. This result quickly implies Vizing's conjecture for almost all pairs of graphs $G,H$ with $\left|G\right|\geq \left|H\right|$, satisfying $\left|G\right|\leq q^{\frac{\left|H\right|}{\log_q\left|H\right|}}$ for $q=\frac{1}{1-p}$ and $p$ the edge probability of the Erd\H{o}s-R\'enyi random graph.
\\[\baselineskip] 2010 Mathematics Subject
      Classification: 05C69
\\[\baselineskip]
      Keywords: Domination number, Cartesian product of graphs, Vizing's conjecture
\end {abstract}

\maketitle

 \section{Introduction}
 For basic graph theoretic notation and definitions see Diestel~\cite{Diest}. All graphs $G(V,E)$ are finite, simple, undirected graphs with vertex set $V$ and edge set $E$. We may refer to the vertex set and edge set of $G$ as $V(G)$ and $E(G)$, respectively.
 
 For any graph $G=(V,E)$, a subset $S\subseteq V$ \emph{dominates} $G$ if $N[S]=G$. The minimum cardinality of $S \subseteq V$, so that $S$ dominates $G$ is called the \emph{domination number} of $G$ and is denoted $\gamma(G)$.
 
 \begin{defn}
The \emph{Cartesian product} of two graphs $G_1(V_1,E_1)$ and \linebreak$G_2(V_2,E_2)$, denoted by $G_1 \square G_2$, is a graph with vertex set $V_1 \times V_2$ and edge set $E(G_1 \square G_2) = \{((u_1,v_1),(u_2,v_2)) : v_1=v_2 \mbox{ and } (u_1,u_2) \in E_1, \mbox{ or } u_1 = u_2 \mbox{ and } (v_1,v_2) \in E_2\}$.
\end{defn}

The famous conjecture of Vadim G. Vizing (1963) \cite{Vizing} states that
\begin{align}
\gamma(G \square H) \geq \gamma(G)\gamma(H).\label{V}
\end{align}

Previous work on this problem has been reviewed in the excellent survey \cite{BDGHHKR}.

One of the earliest significant results is that of Barcalkin and German \cite{BG}, who showed that the conjecture holds for decomposable graphs, that is, graphs $G$ with vertex sets which can be disjointly covered by $\gamma(G)$ cliques. 

A generalization of those techniques came much later in \cite{BR}. The authors defined the related parameter of fair domination and showed that graphs with identical fair domination number and domination number satisfy the conjecture. However, finding bounds on fair domination numbers has been diffcult so far.

The best current bound for the conjectured inequality was shown in 2010 by Suen and Tarr \cite{ST}, 
\[\gamma(G \square H) \geq \frac{1}{2}\gamma(G)\gamma(H)+\frac{1}{2}\min\{\gamma(G),\gamma(H)\}.\]

We take a different point of view and show that for any two fixed domination numbers, $\gamma_1$ and $\gamma_2$, all graphs attaining those domination numbers, respectively, with orders larger than the product $\gamma_1 \gamma_2$, satisfy the conjecture. The proof of this fact is an elementary counting argument. By applying a result of Dryer \cite{Dryer} it is easy to show that Vizing's conjecture holds for almost all graphs $G,H$ with $\left|G\right|\geq \left|H\right|$ satisfying the order bound condition $\left|G\right|\leq q^{\frac{\left|H\right|}{\log_q\left|H\right|}}$ for $q=\frac{1}{1-p}$ and $p$ the edge probability of the Erd\H{o}s-R\'enyi random graph.

\section{Counting Vertices in Blocks}

Given vertex partitions of $G$ into sets $G_1, \dots, G_k$ and $H$ into sets $H_1, \dots, H_l$, a \emph{block} of $G \square H$ is the induced subgraph $G_i \square H_j$, for some $i$, $1\leq i \leq k$, and $j$, $1\leq j \leq l$.

\begin{thm}
For every two graphs $G$ and $H$ satisfying $\left|G\right|\geq \gamma(G)\gamma(H)$ and $\left|H\right|\geq \gamma(G)\gamma(H)$, $\gamma(G \square H) \geq \gamma(G)\gamma(H)$. 
\end{thm}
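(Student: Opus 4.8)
The plan is to argue by contradiction with an elementary pigeonhole count on the fibers of $G \square H$. Write $\gamma_1 = \gamma(G)$ and $\gamma_2 = \gamma(H)$, and suppose toward a contradiction that $D$ is a dominating set of $G \square H$ with $|D| < \gamma_1\gamma_2$. The goal is to exhibit at least $\gamma_1\gamma_2$ distinct vertices that are forced to lie in $D$, contradicting this bound.

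First I would use the hypothesis $|G| \geq \gamma_1\gamma_2$ to locate an ``empty'' fiber. Partition $V(G \square H)$ into the $|G|$ fibers $F_g := \{g\} \times V(H)$, each a copy of $H$ (in the block terminology of the previous paragraph, this is the partition of $V(G)$ into singletons together with the trivial partition of $V(H)$ into a single part). Since $|D| < \gamma_1\gamma_2 \leq |G|$, pigeonhole produces some $g_0 \in V(G)$ with $D \cap F_{g_0} = \emptyset$.

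Now fix $h \in V(H)$ and inspect how $(g_0,h)$ is dominated. Its neighbors in $G \square H$ are the vertices $(g_0, h')$ with $h' \in N_H(h)$, all of which lie in the empty fiber $F_{g_0}$, together with the vertices $(g', h)$ with $g' \in N_G(g_0)$. Since $(g_0,h) \notin D$ and none of the vertices $(g_0,h')$ is in $D$, domination forces some $(g',h) \in D$ with $g' \in N_G(g_0)$. Hence $D$ meets the fiber $V(G) \times \{h\}$ for every $h \in V(H)$; these fibers are pairwise disjoint, so $|D| \geq |V(H)|$. The second hypothesis $|H| \geq \gamma_1\gamma_2$ then gives $|D| \geq \gamma_1\gamma_2$, a contradiction, so $\gamma(G \square H) \geq \gamma_1\gamma_2$.

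The argument is short, and the one point that needs care is keeping straight the roles of the two hypotheses: $|G| \geq \gamma_1\gamma_2$ is precisely what forces an empty fiber to exist once $|D| < \gamma_1\gamma_2$, while $|H| \geq \gamma_1\gamma_2$ is what converts the count $|D| \geq |V(H)|$ into the desired inequality; neither can be dropped as stated. It is also worth isolating that it is the $D$-freeness of the \emph{entire} fiber $F_{g_0}$, not merely of the vertex $(g_0,h)$, that eliminates the ``vertical'' way of dominating $(g_0,h)$ and leaves only a ``horizontal'' dominator. In fact the same computation yields $\gamma(G \square H) \geq \min\{\gamma_1\gamma_2, |H|\}$ whenever $|G| \geq \gamma_1\gamma_2$, and symmetrically; the stated theorem is the symmetric specialization one would record.
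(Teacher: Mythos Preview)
Your argument is correct, and it is considerably shorter than the paper's. The paper partitions $V(G)$ into $\gamma(G)$ parts and $V(H)$ into $\gamma(H)$ parts of prescribed minimum size, classifies each resulting block $G_i\square H_j$ as a ``$G$-cell'' or ``$H$-cell'' block according to how many dominating vertices lie in the corresponding row or column, and then runs an iterated \emph{re-partitioning} procedure that pushes the projections $P_{i,j}$ of $D$ into designated parts so that, at termination, the accumulated counts along a staircase of blocks force at least $\gamma(G)\gamma(H)$ vertices of $D$. Your proof bypasses all of this machinery: the hypothesis $|G|\geq \gamma(G)\gamma(H)$ together with $|D|<\gamma(G)\gamma(H)$ immediately produces an entirely $D$-free $H$-fiber $\{g_0\}\times V(H)$, after which every vertex $(g_0,h)$ can only be dominated horizontally, giving $|D|\geq |H|\geq \gamma(G)\gamma(H)$. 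The gain is clarity and brevity; you also extract the asymmetric consequence $\gamma(G\square H)\geq \min\{\gamma(G)\gamma(H),\,|H|\}$ under the single hypothesis $|G|\geq \gamma(G)\gamma(H)$, which the paper's block argument does not isolate. The paper's decomposition is in the spirit of the Barcalkin--German and fair-reception frameworks and may be intended as a template for further refinements, but for the theorem as stated your fiber argument is the more direct route.
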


\begin{proof}
Let $D$ be a minimum dominating set of $G \square H$. Suppose $\gamma(G)=k$ and $\gamma(H)=l$. Partition the vertices of $G$ arbitrarily into sets $G_1,\dots, G_k$ and the vertices of $H$ into sets $H_1, \dots, H_l$ so that for any $i$, $1\leq i \leq k$, and any $j$, $1\leq j \leq l$, $\left|G_i\right|\geq \gamma(H)$ and $\left|H_j\right|\geq \gamma(G)$. Furthermore, we call a block $B_{i,j}=G_i \square H_j$ a \emph{G-cell block} if there are at least $\left|H_j\right|$ vertices of $D$ in $G \square H_j$. We say $B$ is a \emph{H-cell block} if there are at least $\left|G_i\right|$ vertices of $D$ in $G_i \square H$.

\begin{obs}
Every block is either a $G$-cell block or an $H$-cell block.
\end{obs}


Without loss of generality, suppose $\gamma(G)\geq \gamma(H)$. If no block \linebreak $\{B_{1,1}, B_{2,1}, \dots, B_{k,1}\}$ is a $G$-cell block, then each is an $H$-cell block and we count at least $\gamma(G)\gamma(H)$ vertices of $D$. Thus, we can find at least one block in the above list which is a $G$-cell block, and by definintion, all the blocks in the list are $G$-cell blocks. Call the vertices of $G_1$, $\{v_1, v_2, \dots, v_l\}$. Define \[P_{i,j}=\{u \in G_i: (u,v)\in D \mbox{ for some }v \in H_j\}.\]
That is, $P_{i,j}$ is the projection of the vertices of $D$ in block $B_{i,j}$ onto $G$.

We call the following procedure the \emph{re-partitioning argument}, which we apply for part $G_1$ of the partition.

Notice that for any $v\in G_1$, if $v\notin P_{1,1}$, since $B_{1,1}$ is a $G$-cell block, there exists some vertex $u\in P_{i,1}$, for $i\geq 2$. Furthermore, such vertices $u$ can be chosen distinctly for every $v$, and so we can define an injective function $f_{B_{1,1}}:\{v\in G_1: v\notin P_{1,1}\}\rightarrow V(G) \backslash G_1$  so that $f_{B_{1,1}}(v)=u$ for $v$ and $u$ as defined above. 

We re-partition $G$ by exchanging every vertex  $v\in G_1$ such that $v\notin P_{1,1}$ with $f_{B_{1,1}}(v)$, and calling the new set of vertices $\hat{G}_1^1$. Call the remaining sets of the partition $\hat{G}_2^1, \dots, \hat{G}_k^1$. Next, for every $i$, $2\leq i \leq k$, we remove vertices of $G_1^1$ which are not in $P_{1,1}$ and add them arbitrarily to other parts, remove $P_{i,1}$ from $\hat{G}_i^1$ to form $G_i^1$, and append ${G}_1^1$ by $P_{i,1}$ to define the vertex partition $G_1^1=\hat{G}_1^1 \cup (\cup_{i=2}^k P_{i,1}), G_2^1, \dots, G_k^1$. We call the blocks of this partition $B_{i,j}^1$ for $1\leq i \leq k$ and $1\leq j \leq l$.

We note that 

\begin{enumerate}
\item The new block $B_{1,1}^1$ is a $G$-cell block and contains at least $\gamma(G)$ vertices of $D$.
\item The new blocks $B_{i,1}^1$, for $2\leq i \leq k$, contain no vertices of $D$.
\item Some or all of the new blocks $B_{i,1}^1$, for $2\leq i \leq k$, may be empty.
\end{enumerate}

If all blocks $B_{i,1}^1$ are empty for $2\leq i \leq k$, then $G_1^1$ contains all the vertices of $G$ and for every vertex $v\in G$, there is a vertex of $D$ in $\{v\}\square H$. Since $\left|G\right|\geq \gamma(G)\gamma(H)$, the conjecture holds.

Next, if no block $\{B_{2,2}^1, B_{3,2}^1, \dots, B_{k,2}^1\}$ is a $G$-cell block, then each is an $H$-cell block and each block $B_{i,2}^1$ contains at least $\left|G_i^1\right|$ vertices of $D$, for $2\leq i \leq k$. Since the vertices of $D\cap B_{1,1}^1$ do not appear among these, and $B_{1,1}^1$ contains at least $\left|G_1^1\right|$ vertices of $D$, we count at least $\left|G\right|\geq \gamma(G)\gamma(H)$ vertices of $D$. This leaves us with the case when $B_{2,2}^1$ is a $G$-cell block.

We repeat the previous re-partitioning argument for the part $G_2^1$ without altering $G_1^1$. Define an injective function $f_{B_{2,2}}:\{v\in V(G_2^1): v\notin P_{2,2}\}\rightarrow V(G) \backslash (G_1^1\cup G_2^1)$  so that $f_{B_{2,2}}(v)=u$ for $v\in G_2^1$, $v\notin P_{2,2}$, and $u\in P_{i,2}$, for $i\geq 3$.

We exchange every vertex $v\in G_2^1$ such that $v\notin P_{2,2}$, with $f_{B_{2,2}^1}(v)$, and call the new set of vertices $\hat{G}_2^2$. Call the remaining new sets of the partition $\hat{G}_3^2, \dots, \hat{G}_k^2$. Next, for every $i$, $3\leq i \leq k$, we remove vertices of $G_2^2$ which are not in $P_{2,2}$ and add them arbitrarily to parts other than $G_1^2$, we remove $P_{i,2}$ from $\hat{G}_i^2$ to form $G_i^2$, and append ${G}_1^2$ by $P_{i,2}$ to define the vertex partition $G_1^2=\hat{G}_1^2 \cup (\cup_{i=3}^k P_{i,2}), G_3^2, \dots, G_k^2$. We call the blocks of this partition $B_{i,j}^2$ for $1\leq i \leq k$ and $1\leq j \leq l$.

We note that 

\begin{enumerate}
\item The block $B_{2,2}^2$ is a $G$-cell block and $B_{1,2}^2\cup B_{2,2}^2$ contain at least $\gamma(G)$ vertices of $D$.
\item The new blocks $B_{i,2}^2$, for $3\leq i \leq k$, contain no vertices of $D$.
\item Some or all of the new blocks $B_{i,2}^2$, for $3\leq i \leq k$, may be empty.
\end{enumerate}

If all blocks $B_{i,2}^2$ are empty for $3\leq i \leq k$, then $G_1^2\cup G_2^2$ contains all the vertices of $G$ and for every vertex $v\in G$, there is a vertex of $D$ in $\{v\}\square H$. Since $\left|G\right|\geq \gamma(G)\gamma(H)$, the conjecture holds.

Again, if no block $\{B_{3,3}^2, B_{4,3}^2, \dots, B_{k,3}^2\}$ is a $G$-cell block, then each is an $H$-cell block and each block $B_{i,3}^2$ contains at least $\left|G_i^2\right|$ vertices of $D$, for $3\leq i \leq k$. Since the vertices of $D\cap B_{1,1}^2$ and $D\cap (B_{1,2}^2\cup B_{2,2}^2)$ do not appear among these, and they contain at least $\left|G_1^2\right|$ and $\left|G_2^2\right|$ vertices of $D$ respectively, we count at least $\left|G\right|\geq \gamma(G)\gamma(H)$ vertices of $D$. This leaves us with the case when $B_{2,2}^1$ is a $G$-cell block.

We continue re-partitioning for every set $G_i^{i-1}$ for $3\leq i \leq l-1$ so that

\begin{enumerate}
\item The block $B_{i,i}^i$ is a $G$-cell block and $B_{1,i}^i\cup B_{2,i}^i\cup \dots \cup B_{i,i}^i$ contain at least $\gamma(G)$ vertices of $D$.
\item The new blocks $B_{j,i}^i$, for $i+1\leq j \leq k$, contain no vertices of $D$.
\item Some or all of the new blocks $B_{j,i}^i$, for $i+1\leq j \leq k$, may be empty.
\end{enumerate}

Suppose the re-partitioning algorithm terminates for some $i=m$. Summing the number of vertices of $D$ in the blocks 
\begin{align*}
&B_{1,1}^m\\
&B_{1,2}^m\cup B_{2,2}^m\\
&\vdots\\
&B_{1,m}^m\cup \dots \cup B_{m,m}^m
\end{align*}
produces at least $\gamma(G)\gamma(H)$ vertices.

\end{proof}

For the probabilistic result, we use the Erd\H{o}s-R\'{e}nyi random graph model \cite{B}, $G_{n,p}$, where a graph contains $n$ vertices and each pair of vertices is joined by an edge with probability $p$.  Dryer \cite{Dryer} showed

\begin{lem}\cite{Dryer}
Choose $p\in [0,1) $ and $T$ any vertex set of size $(1+\epsilon)\log_qn$ in $G_{n,p}$, where $\epsilon>0$ and $q=\frac{1}{1-p}$. Then $Pr(T \mbox{ is a dominating set})$ approaches $1$ as $n$ approaches infinity.
\end{lem}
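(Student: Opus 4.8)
The plan is to establish this by an elementary first-moment (union bound) argument, using that in $G_{n,p}$ the randomness lies entirely in the edges while $T$ is a \emph{fixed} vertex set. First I would put $t:=\lceil(1+\epsilon)\log_q n\rceil=|T|$ (for $0<p<1$, so that $q>1$) and record the trivial fact that every vertex of $T$ dominates itself; hence $T$ fails to dominate $G_{n,p}$ precisely when some vertex $v\in V(G_{n,p})\setminus T$ has no neighbor in $T$. For a fixed such $v$, the $t$ potential edges joining $v$ to the vertices of $T$ are, by the definition of $G_{n,p}$, mutually independent and each present with probability $p$, so $\Pr(v\text{ has no neighbor in }T)=(1-p)^{t}=q^{-t}$.

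Next I would bound $q^{-t}\le q^{-(1+\epsilon)\log_q n}=n^{-(1+\epsilon)}$ and sum over the fewer than $n$ vertices outside $T$: by the union bound, $\Pr(T\text{ is not a dominating set})\le n\cdot n^{-(1+\epsilon)}=n^{-\epsilon}$. Therefore $\Pr(T\text{ is a dominating set})\ge 1-n^{-\epsilon}$, which tends to $1$ as $n\to\infty$ since $\epsilon>0$; this is exactly the assertion.

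I do not anticipate a genuine obstacle. The only points deserving a word of care are that $(1+\epsilon)\log_q n$ is generally not an integer (harmless: rounding $t$ up only decreases $q^{-t}$), the degenerate value $p=0$ (where the statement is vacuous), and making explicit that for $v\notin T$ the edge indicators $vu$, $u\in T$, are independent (immediate from the product structure of $G_{n,p}$). An equivalent formulation lets $X$ count the vertices of $V(G_{n,p})\setminus T$ having no neighbor in $T$, computes $\mathbb{E}[X]=(n-t)q^{-t}\le n^{-\epsilon}$, and invokes Markov's inequality; the version above merely inlines that computation. Combined with the Theorem above, this Lemma then yields Vizing's conjecture for almost all pairs $G,H$ with $|G|\ge|H|$ in the range $|G|\le q^{|H|/\log_q|H|}$, by taking random-sized dominating sets of $G$ and of $H$ and verifying the hypotheses $|G|,|H|\ge\gamma(G)\gamma(H)$.
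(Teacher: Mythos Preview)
Your argument is correct: the union-bound computation $\Pr(T\text{ not dominating})\le (n-t)(1-p)^{t}\le n\cdot n^{-(1+\epsilon)}=n^{-\epsilon}\to 0$ is exactly the standard first-moment proof of this fact. Note, however, that the paper does not supply its own proof of this lemma; it simply cites Dryer's dissertation, so there is no in-paper argument to compare against. Your write-up is what one would expect such a proof to look like. One small quibble: at $p=0$ the quantity $\log_q n$ is undefined (since $q=1$), so the statement is not so much vacuous as ill-posed there; otherwise your handling of the boundary issues (integer rounding, independence of the relevant edge indicators) is fine.
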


Applying Dryer's result to the condition $\left|G\right|\geq \gamma(G)\gamma(H)$ and $\left|H\right|\geq \gamma(G)\gamma(H)$ produces

\begin{cor}
Vizing's conjecture holds for almost all pairs of graphs $G,H$ with $\left|G\right|\geq \left|H\right|$, satisfying $\left|G\right|\leq q^{\frac{\left|H\right|}{\log_q\left|H\right|}}$ for $q=\frac{1}{1-p}$ and $p$ the edge probability of the Erd\H{o}s-R\'enyi random graph.
\end{cor}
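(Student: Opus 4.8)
The plan is to feed the deterministic Theorem a probabilistic upper bound on $\gamma(G)$ and $\gamma(H)$ supplied by Dryer's Lemma. Work in the product space $G_{n,p}\times G_{m,p}$ of two independent Erd\H{o}s-R\'enyi graphs, $G=G_{n,p}$ and $H=G_{m,p}$, with $n\geq m$ and $n\leq q^{m/\log_q m}$; since $n\geq m$, letting $m\to\infty$ forces $n\to\infty$ as well. By the Theorem it suffices to show that the two order conditions $\left|G\right|\geq\gamma(G)\gamma(H)$ and $\left|H\right|\geq\gamma(G)\gamma(H)$ hold with probability tending to $1$, and, because $n\geq m$, the binding one is $\left|H\right|=m\geq\gamma(G)\gamma(H)$.

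First I would bound the domination numbers from above. Fix $\epsilon>0$. Choosing a uniformly random vertex subset $T\subseteq V(G)$ of size $\lceil(1+\epsilon)\log_q n\rceil$ and applying Dryer's Lemma, $T$ dominates $G$ with probability tending to $1$, so with probability tending to $1$ we have $\gamma(G)\leq(1+\epsilon)\log_q n+1$; the same argument gives $\gamma(H)\leq(1+\epsilon)\log_q m+1$ with probability tending to $1$. Since $G$ and $H$ are independent, both bounds hold simultaneously with probability tending to $1$, whence
\[
\gamma(G)\gamma(H)\leq\bigl((1+\epsilon)\log_q n+1\bigr)\bigl((1+\epsilon)\log_q m+1\bigr).
\]

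Next I would invoke the order hypothesis. From $n\leq q^{m/\log_q m}$ we get $\log_q n\leq m/\log_q m$, i.e.\ $(\log_q n)(\log_q m)\leq m$. Substituting into the previous display, with probability tending to $1$ we have $\gamma(G)\gamma(H)\leq(1+\epsilon)^2 m+O(\log m)$. Hence, for $\epsilon$ small and $m$ large, the Theorem's hypotheses $\left|H\right|\geq\gamma(G)\gamma(H)$ and, a fortiori (as $n\geq m$), $\left|G\right|\geq\gamma(G)\gamma(H)$ are met with probability tending to $1$, and the Theorem then yields $\gamma(G\square H)\geq\gamma(G)\gamma(H)$ for almost all such pairs.

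The one delicate point, and the step I expect to require the most care to state correctly, is reconciling the multiplicative slack $(1+\epsilon)$ in Dryer's upper bound with the exact exponent $m/\log_q m$ in the order condition: strictly, the computation above establishes the conclusion for the subfamily with $n\leq q^{m/((1+\epsilon)^2\log_q m)}$, and one then lets $\epsilon\to 0$ to recover the stated ``almost all pairs satisfying $\left|G\right|\leq q^{\left|H\right|/\log_q\left|H\right|}$'' formulation. A secondary bookkeeping matter is uniformity of the convergence: Dryer's Lemma is asymptotic in the single parameter $n$, so one must be careful that the limit is taken with $m\to\infty$ driving $n\to\infty$, and observe that pairs with bounded $m$ form a negligible set that does not affect the ``almost all'' conclusion.
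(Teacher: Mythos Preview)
Your approach is exactly the one the paper intends: the paper's own argument for the Corollary is the single sentence ``Applying Dryer's result to the condition $\left|G\right|\geq\gamma(G)\gamma(H)$ and $\left|H\right|\geq\gamma(G)\gamma(H)$ produces [the Corollary],'' and your proposal fleshes out precisely this computation. Your flagging of the $(1+\epsilon)$ slack against the exact exponent $m/\log_q m$ is apt---the paper does not address it---and your treatment is as careful as the informal statement of the Corollary allows.
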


It would be interesting to prove the following

\begin{conj}
Vizing's conjecture holds for almost all pairs of graphs.
\end{conj}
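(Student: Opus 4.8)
The plan is to prove the conjecture in the Erd\H{o}s-R\'enyi formulation that contains the Corollary as the ``balanced'' special case: draw $G$ from $G_{m,p}$ and $H$ from $G_{n,p}$ independently, with $|G|=m\ge n=|H|$ and $p$ a fixed constant (so $p=\tfrac12$ is the uniform model on labeled graphs), and prove that $\Pr\!\left[\gamma(G\,\square\,H)\ge\gamma(G)\gamma(H)\right]\to 1$ as $m,n\to\infty$, now \emph{without} the order restriction $|G|\le q^{|H|/\log_q|H|}$. The idea is to keep the fiber/projection bookkeeping behind Theorem~2.1 --- specializing its partitions so that $H$ is split into singletons and $G$ is not split --- but to replace the re-partitioning step by a stronger fact that holds almost surely when the first coordinate is random; this is what removes the order hypothesis and, in particular, reaches the regime $|G|\gg q^{|H|/\log_q|H|}$ that Theorem~2.1 cannot touch.

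First I would record the concentration of the domination number of a dense random graph. A fixed $k$-element set dominates $G_{N,p}$ with probability $(1-q^{-k})^{N-k}\le\exp\!\left(-(N-k)q^{-k}\right)$; for $k\le(1-\varepsilon)\log_q N$ this is $\exp\!\left(-N^{\varepsilon-o(1)}\right)$, which overwhelms the crude count $\binom{N}{k}\le\exp\!\left(O((\log N)^2)\right)$ of candidate sets. Since the events that $u$ is undominated by $S$, for $u\notin S$, depend on pairwise disjoint edge sets and so are independent, a Chernoff bound upgrades this to the statement that a.a.s.\ every set of size at most $(1-\varepsilon)\log_q N$ leaves at least $\tfrac12 N^{\varepsilon}$ vertices undominated. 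Combined with Dryer's Lemma this gives $\gamma(G_{N,p})=(1+o(1))\log_q N$ a.a.s.; applying it to $G$ and to $H$, I may assume $\gamma(G)=(1+o(1))\log_q m$, $\gamma(H)=(1+o(1))\log_q n$, and the ``every small set is very deficient'' property for $G$.

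The core is then a fiber dichotomy. Let $D$ be a minimum dominating set of $G\,\square\,H$; index fibers by $V(H)$, put $D_v=D\cap(V(G)\times\{v\})$, and let $P_v\subseteq V(G)$ be its projection, so $|D_v|=|P_v|$ and $|D|=\sum_v|P_v|$. If $u\in V(G)$ is not dominated in $G$ by $P_v$, then $(u,v)$ can only be dominated through an $H$-edge, so the set $Q_v$ of such ``deficient'' vertices satisfies $Q_v\subseteq\bigcup_{v'\in N_H(v)}P_{v'}$, and hence $|Q_v|\le|D|$; this is exactly the reasoning behind the Observation, applied to an arbitrary fiber rather than a $D$-free one. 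Now fix the threshold $t=(1-\varepsilon)\log_q m$. If every fiber has $|P_v|>t$, then $|D|=\sum_v|P_v|>nt$, which exceeds $\gamma(G)\gamma(H)=(1+o(1))\log_q m\log_q n$ because $n(1-\varepsilon)\gg\log_q n$. Otherwise some fiber $v_0$ has $|P_{v_0}|\le t$, so $P_{v_0}$ is a set of size at most $(1-\varepsilon)\log_q m$ in the random graph $G$; the deficiency property then yields $|D|\ge|Q_{v_0}|\ge\tfrac12 m^{\varepsilon}$, and since $m\ge n$ forces $\gamma(G)\gamma(H)\le(1+o(1))(\log_q m)^2\ll m^{\varepsilon}$, again $|D|\ge\gamma(G)\gamma(H)$. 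In either case $\gamma(G\,\square\,H)\ge\gamma(G)\gamma(H)$, and the finitely many exceptional events each carry probability $o(1)$.

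I expect the real difficulty to lie in the quantifiers rather than in any single inequality. One must first decide what it should mean for the conjecture to hold ``for almost all pairs'' when $m$ and $n$ may tend to infinity at arbitrarily different rates: under, say, the uniform distribution on all labeled pairs of order at most $N$, the mass already concentrates on $m,n=(1\pm o(1))N$, so the statement collapses to the Corollary, and all the content lies in insisting on the ratio-free version above. One must then verify the uniformity in the deficiency estimate, where the union runs over every subset of $V(G)$ of size up to $(1-\varepsilon)\log_q m$ and the per-set failure probability must beat $\exp\!\left(O((\log m)^2)\right)$ for \emph{all} $m$, however large relative to $n$; this is the step where an overlooked dependence would surface. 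Finally, the argument genuinely uses $\gamma(H)=o(|H|)$, which is a.a.s.\ true but fails for pathological $H$ such as $\overline{K_{n}}$ --- so the result is honestly about random \emph{pairs}, not about a random $G$ paired with an adversarial $H$, and resolving Vizing's conjecture in full would of course demand ideas well beyond this note.
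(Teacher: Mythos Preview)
The paper does not prove this statement; it is posed explicitly as an open conjecture following the Corollary, so there is no proof in the paper to compare against. Your proposal is therefore not a reconstruction but an attempted resolution of that open problem, and it should be read as such.

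The argument is sound. The decisive new ingredient, which the paper does not have, is the \emph{deficiency property}: a.a.s.\ in $G_{m,p}$, every vertex set of size at most $(1-\varepsilon)\log_q m$ leaves at least $\tfrac12 m^{\varepsilon}$ vertices undominated. This is strictly stronger than the concentration $\gamma(G_{m,p})=(1+o(1))\log_q m$ that the paper extracts from Dryer's lemma, and it is precisely what removes the order restriction $|G|\le q^{|H|/\log_q|H|}$. Your union-bound calculation is correct: for a fixed $k$-set with $k\le(1-\varepsilon)\log_q m$ the number of undominated vertices is a sum of independent indicators with mean at least $m^{\varepsilon-o(1)}$, Chernoff gives per-set failure probability $\exp(-\Omega(m^{\varepsilon}))$, and this beats the $\exp(O((\log m)^2))$ candidate sets uniformly in $m$. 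The fiber dichotomy is also correct: each $u\in Q_{v_0}$ forces $(u,v')\in D$ for some $v'\in N_H(v_0)$, and distinct $u$ produce distinct elements of $D$ since they differ in the first coordinate, so $|D|\ge|Q_{v_0}|$; the two cases then finish as you describe, using only $\gamma(G)\le(1+o(1))\log_q m$, $\gamma(H)\le(1+o(1))\log_q n$, and $m\ge n\to\infty$.

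Your self-identified caveats are the real ones. The phrase ``almost all pairs'' is not made precise in the paper; your ratio-free reading (independent $G_{m,p}$, $G_{n,p}$ with $m\ge n\to\infty$ at arbitrary relative rates) is the natural strong interpretation, while the uniform model on labeled pairs of order at most $N$ indeed concentrates on $m,n=(1\pm o(1))N$ and collapses to the Corollary. And the argument does need $\gamma(H)<(1-\varepsilon-o(1))n$ for Case~1, so randomness of $H$ is genuinely used, if only through Dryer's upper bound. With those formulations fixed, this settles the conjecture as the paper states it.
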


 \bibliographystyle{plain}
 
 \end{document}